\theoremstyle{definition}
\newtheorem{defi}{Definition}
 \theoremstyle{plain}
\newtheorem{thm}[defi]{Theorem}
\newtheorem{lem}[defi]{Lemma}
\newtheorem{cor}[defi]{Corollary}
\newtheorem{qn}[defi]{Question}
\begin{document}
%
\title{Planar Difference Functions}



%
\author{\IEEEauthorblockN{Joanne L. Hall\IEEEauthorrefmark{1},
Asha Rao\IEEEauthorrefmark{2}, and
Diane Donovan\IEEEauthorrefmark{3},}
\IEEEauthorblockA{\IEEEauthorrefmark{1}Department of Algebra, Charles University in Prague\\
186 75 Praha 8, Sokolovska 83, Czech Republic \\
}
\IEEEauthorblockA{\IEEEauthorrefmark{2}\IEEEauthorrefmark{1} School of Mathematical and Geospatial Sciences, RMIT University\\
GPO Box 2476
Melbourne 3001, Australia\\
}
\IEEEauthorblockA{\IEEEauthorrefmark{3}School of Mathematics and Physics, University of Queensland\\
St Lucia, Brisbane 4072, Australia\\
}
\IEEEauthorblockA{\IEEEauthorrefmark{1}Email: hall@karlin.mff.cuni.cz\\
\IEEEauthorrefmark{2}Email: asha@rmit.edu.au\\
\IEEEauthorrefmark{3}Email:dmd@maths.uq.edu.au}
}



\maketitle

\begin{abstract}
In 1980 Alltop produced a family of cubic phase sequences that nearly meet the Welch bound for maximum non-peak correlation magnitude. This family of sequences were shown by Wooters and Fields to be useful for quantum state tomography. Alltop's construction used a function that is not planar, but whose difference function is planar. In this paper we show that Alltop type functions cannot exist in fields of characteristic 3 and that for a known class of planar functions, $x^3$ is the only Alltop type function.

\end{abstract}


%
\IEEEpeerreviewmaketitle

\section{Introduction}

Planar functions belong to the larger class of highly nonlinear functions which are of use in classical cryptographic systems, coding theory as well as being of theoretical interest \cite{CD04}.

Let $\mathbb{F}_{p^r}$ be a field of characteristic $p$. A function $f : \mathbb{F}_{p^r} \rightarrow \mathbb{F}_{p^r}$ is  called a \emph{planar
function} if for every  $a \in \mathbb{F}_{p^r}^*$ the function
$\Delta_{fa}: x \mapsto f(a + x)- f (x)$ is a bijection.

An equivalent definition of a planar function involves the ability to construct an affine plane \cite[\S 5]{dem97}, which is where the name planar originates.  It is known that planar functions do not exist on fields of characteristic $2$ \cite[\S 5]{dem97}.

Two orthonormal bases $B_1$ and $B_2$ of  $\mathbb{C}^d$ are \emph{unbiased} if $|\langle \vec{x}|\vec{y}\rangle|=\frac{1}{\sqrt{d}}$ for all $\vec{x}\in B_1$ and $\vec{y}\in B_2$. A set of bases for $\mathbb{C}^d$ which are pairwise unbiased is a set of \emph{mutually unbiased bases} (MUBs).
This idea is credited to Schwinger \cite{Schw60} who realised that
a quantum system prepared in a basis state from $B'$ reveals no
information when measured with respect to the basis $B$. Mutually unbiased bases (MUBs) are an important tool in quantum information theory and have applications in quantum cryptography \cite{BB84,SARG04} and  quantum state tomography \cite{WF89}.


We highlight  two different constructions of mutually unbiased bases for
odd prime power integers, one which uses a polynomial of degree 3
but only works for finite fields of characteristic $p\geq 5$
\cite[Theorem 1]{KR03}. This construction is a generalisation
of low correlation complex sequences first constructed by Alltop
\cite{Alltop80} for spread spectrum radar and communication
applications.

The other construction uses a polynomial of degree
2, which is a planar function, to construct the vectors in a complete set of MUBs (see Thm \ref{royscott}) \cite{RS07,KR03}. In contrast, the Alltop construction of complete sets of MUBs (Thm \ref{alltop} \cite{Alltop80, KR03}) uses a function, $f(x)=x^3$  which is not planar, but for which the
difference function $\Delta_{fa}(x)$ is planar.  We aim to discover if $x^3$ is the only polynomial function
with this property.

Let $\omega_p:=e^{\frac{2i\pi}{p}}$.

\begin{thm}[Planar function construction]\label{royscott}\cite[Thm 4]{DY2007}\cite[Thm 4.1]{RS07}
Let $\mathbb{F}_q$ be a field of odd characteristic $p$.  Let $\Pi(x)$ be a planar function on $\mathbb{F}_q$.  Let $V_a:=\{\vec{v}_{ab}:b\in \mathbb{F}_q\}$ be the set of vectors
\begin{equation}\label{planarv}
\vec{v}_{ab}= \frac{1}{\sqrt{q}}\left(\omega_p^{\mbox{\emph{tr}}(a\Pi(x)+bx)}\right)_{x\in\mathbb{F}_q}\
\end{equation}
with $a,b\in\mathbb{F}_q$.  The standard basis $E$ along with the sets $V_a$, $a\in \mathbb{F}_q$, form a complete set of $q+1$ MUBs  in $\mathbb{C}^q$.
\end{thm}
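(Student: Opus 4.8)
\noindent\emph{Proof sketch.} The plan is to verify directly, from the definition \eqref{planarv}, the three properties that together say $E$ together with $\{V_a : a\in\mathbb{F}_q\}$ is a complete set of MUBs: (i) each $V_a$ is an orthonormal basis of $\mathbb{C}^q$; (ii) $E$ is unbiased with every $V_a$; and (iii) $V_a$ and $V_{a'}$ are unbiased whenever $a\neq a'$. Granting (i)--(iii) one obtains $q+1$ pairwise unbiased orthonormal bases, which is the maximum possible number of MUBs in $\mathbb{C}^q$ \cite{WF89}, so the set is complete. Throughout I would write $\chi(t):=\omega_p^{\mathrm{tr}(t)}$ for the canonical additive character of $\mathbb{F}_q$ and use the orthogonality relation $\sum_{x\in\mathbb{F}_q}\chi(cx)=0$ for $c\in\mathbb{F}_q^*$, together with $\overline{\chi(t)}=\chi(-t)$.

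Parts (i) and (ii) are routine and use no property of $\Pi$. Every entry of $\vec v_{ab}$ has modulus $1/\sqrt q$, so $\|\vec v_{ab}\|=1$; and for $b\neq b'$,
\[
\langle \vec v_{ab}|\vec v_{ab'}\rangle=\frac1q\sum_{x\in\mathbb{F}_q}\chi\!\big((b'-b)x\big)=0 ,
\]
so $V_a$ consists of $q$ orthonormal vectors in $\mathbb{C}^q$, hence is a basis. For (ii) the inner product of the standard basis vector indexed by $y$ with $\vec v_{ab}$ is the single coordinate $\tfrac1{\sqrt q}\chi(a\Pi(y)+by)$, of modulus $1/\sqrt q$.

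The substance is in (iii), and this is the only step that uses planarity. Fix $a\neq a'$ and set $c:=a'-a\in\mathbb{F}_q^*$, $d:=b'-b$; then $\langle \vec v_{ab}|\vec v_{a'b'}\rangle=\frac1q S$ with $S:=\sum_{x\in\mathbb{F}_q}\chi\!\big(c\Pi(x)+dx\big)$, so it suffices to show $|S|^2=q$. Expanding $|S|^2=\sum_{x,y}\chi\!\big(c(\Pi(x)-\Pi(y))+d(x-y)\big)$ and substituting $x=y+h$ gives
\[
|S|^2=\sum_{h\in\mathbb{F}_q}\chi(dh)\sum_{y\in\mathbb{F}_q}\chi\!\big(c\,\Delta_{\Pi h}(y)\big),\qquad \Delta_{\Pi h}(y):=\Pi(y+h)-\Pi(y).
\]
For $h=0$ the inner sum is $q$ and $\chi(0)=1$; for $h\neq 0$, planarity of $\Pi$ makes $\Delta_{\Pi h}$ a bijection of $\mathbb{F}_q$, so $y\mapsto c\,\Delta_{\Pi h}(y)$ ranges over all of $\mathbb{F}_q$ (since $c\neq 0$) and the inner sum vanishes. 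Hence $|S|^2=q$, i.e.\ $|\langle \vec v_{ab}|\vec v_{a'b'}\rangle|=1/\sqrt q$, which is (iii). I expect the only real obstacle to be performing the reduction of $|S|^2$ to a sum over the difference functions $\Delta_{\Pi h}$ cleanly, and then recognising that the planarity hypothesis is precisely what forces every term with $h\neq 0$ to vanish; the remaining manipulations are just bookkeeping with additive characters, and combining (i)--(iii) with the $q+1$ bound finishes the argument.
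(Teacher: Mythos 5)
Your proposal is correct, and it follows essentially the same route the paper relies on: the theorem is quoted from Roy--Scott and Ding--Yin, and the paper identifies the key ingredient as the planar character-sum property in equation (\ref{eqn:planarsum}), which is exactly what your step (iii) establishes via the standard squaring substitution $x=y+h$ reducing to sums over the bijective difference functions $\Delta_{\Pi h}$. The only cosmetic difference is that you derive this character-sum bound inline rather than citing it (as the paper does via \cite[Theorem 16]{CD04}), and your remaining steps (orthonormality, unbiasedness with $E$, and completeness from the $q+1$ bound) are the routine parts of the standard argument.
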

The property that is being exploited in the planar function construction is that \cite[Theorem 16]{CD04}
\begin{equation}\label{eqn:planarsum}
\left|\sum_{x\in\mathbb{F}_q}\omega_p^{\textrm{tr}(\Pi(x))}\right|=\sqrt{q}.
\end{equation}

\begin{thm}[Alltop Construction]\cite[Thm 1]{KR03}\label{alltop}
Let $\mathbb{F}_q$ be a finite field of odd characteristic $p\geq5$ and let $V_a:=\{\vec{v}_{ab}:b\in \mathbb{F}_q\}$ be the set of vectors
\begin{equation}\label{Alltopv}
\vec{v}_{ab}:= \frac{1}{\sqrt{q}}\left(\omega_p^{\mbox{\emph{tr}}((x+a)^3+b(x+a))}\right)_{x\in\mathbb{F}_q}
\end{equation}
with $a,b\in\mathbb{F}_q$. The standard basis $E$ along with the sets $V_a$, $a\in \mathbb{F}_q$, form a complete set of $q+1$ MUBs  in $\mathbb{C}^q$.
 \end{thm}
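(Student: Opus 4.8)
The plan is to verify the two defining features of a complete set of mutually unbiased bases straight from the definition: that each listed set is an orthonormal basis of $\mathbb{C}^q$, and that any two of the listed sets are unbiased. The list $E, V_0, V_1, \dots$ contains $1+|\mathbb{F}_q| = q+1$ sets, and since it is well known that $\mathbb{C}^q$ supports at most $q+1$ pairwise unbiased bases, completeness follows for free once mutual unbiasedness is proved. Three kinds of pairing occur: (i) the standard basis $E$ against some $V_a$; (ii) two vectors $\vec v_{ab}, \vec v_{ab'}$ of the same $V_a$; and (iii) two vectors $\vec v_{ab}, \vec v_{a'b'}$ with $a \neq a'$. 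Pairing (i) is immediate because every entry of every $\vec v_{ab}$ has modulus $q^{-1/2}$, so $|\langle \vec e_x, \vec v_{ab}\rangle| = q^{-1/2}$ for each standard basis vector $\vec e_x$; the same observation gives $\|\vec v_{ab}\| = 1$.

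For pairing (ii) I would compute $\langle \vec v_{ab}, \vec v_{ab'}\rangle$: in the difference of the two exponents the cubic terms $(x+a)^3$ cancel, leaving $\frac1q\sum_{x\in\mathbb{F}_q}\omega_p^{\mathrm{tr}((b'-b)(x+a))}$, and after the bijective substitution $y = x+a$ this becomes $\frac1q\sum_{y\in\mathbb{F}_q}\omega_p^{\mathrm{tr}((b'-b)y)}$. This sum is $q$ when $b=b'$ and $0$ when $b\neq b'$, since $y\mapsto\omega_p^{\mathrm{tr}(cy)}$ is a nontrivial character of the additive group $\mathbb{F}_q$ whenever $c\neq 0$. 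Hence $V_a$ is a set of $q$ orthonormal vectors in a $q$-dimensional space, i.e.\ an orthonormal basis.

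Pairing (iii) is the substantive case. Here
\[
\langle \vec v_{ab}, \vec v_{a'b'}\rangle = \frac1q\sum_{x\in\mathbb{F}_q}\omega_p^{\mathrm{tr}(P(x))}, \qquad P(x) = (x+a')^3 - (x+a)^3 + b'(x+a') - b(x+a).
\]
The crucial remark is that $(x+a')^3-(x+a)^3$ is, after the bijective shift $u = x+a$, the difference function $\Delta_{f,\,a'-a}(u)$ of $f(x)=x^3$ (in the notation of the introduction), and hence a polynomial in $x$ of degree precisely $2$ with leading coefficient $3(a'-a)$. This is the one spot where the hypothesis $p\geq5$ matters: being odd and distinct from $3$, it forces $3(a'-a)\neq0$, so $P(x)=\alpha x^2+\beta x+\gamma$ is a genuine quadratic, $\alpha\neq0$. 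A degree-$2$ polynomial over a field of odd characteristic is always a planar function --- its difference function $x\mapsto 2\alpha cx+(\alpha c^2+\beta c)$ is affine with nonzero slope $2\alpha c$ for every $c\neq0$ (this uses $p\neq2$, equivalently one completes the square) --- so \eqref{eqn:planarsum} applied with $\Pi = P$ yields $\bigl|\sum_{x\in\mathbb{F}_q}\omega_p^{\mathrm{tr}(P(x))}\bigr| = \sqrt q$. Therefore $|\langle \vec v_{ab}, \vec v_{a'b'}\rangle| = \sqrt q / q = q^{-1/2}$, which is exactly what unbiasedness of $V_a$ and $V_{a'}$ requires.

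The only genuine obstacle is the bookkeeping in step (iii): expanding the cubic difference correctly, checking that every substitution used is a bijection of $\mathbb{F}_q$, and pinpointing exactly where characteristics $2$ and $3$ get excluded (non-vanishing of $3(a'-a)$ and division by $2$ when completing the square). Beyond that, everything rests on orthogonality of additive characters together with the single identity \eqref{eqn:planarsum}; no appeal to Weil's bound or to an explicit Gauss-sum formula is needed past what that identity already supplies. An essentially equivalent route would present the whole construction as a case of Theorem \ref{royscott} applied to the planar functions $\Delta_{fa}$, $a\in\mathbb{F}_q^*$, for $f(x)=x^3$; I would still favour the direct computation above, as it exposes where $p\geq5$ is used and keeps the argument self-contained.
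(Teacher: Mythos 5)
Your proposal is correct and follows essentially the same route as the paper: the paper (which cites the result to Klappenecker--R\"otteler and only sketches the argument after the theorem statement) likewise computes the cross-basis inner product, observes that the exponent is a quadratic with leading coefficient $3(a-c)\neq 0$ when $p\geq 5$, and invokes the planar exponential-sum identity (\ref{eqn:planarsum}). You merely add the routine checks (orthonormality within each $V_a$ and unbiasedness against the standard basis), which are handled correctly.
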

Although on the surface the Alltop construction does not use a planar function, when inspecting the inner product of the vectors,
\begin{align}
\langle\vec{v}_{ab}|\vec{v}_{cd}\rangle= &  \frac{1}{q}\bigg|\sum_{x\in\mathbb{F}_q}\omega_p^{\textrm{tr}[3(a-c)x^2 + (3a^2-3c^2 + b-d)x} \nonumber\\
& \quad\quad\quad\quad{}^{+ (a^3-c^3 + ba -dc)]}\bigg|
\end{align}
we notice a polynomial of degree 2. Now 
$\Pi(x)=x^2$ is a planar function, and equation (\ref{eqn:planarsum}) ensures that a set of MUBs has been constructed.  Thus if we take $f(x)=(x+b)^3$ then $\Delta_{fd}(x)$ is a planar function. The question that arises is whether $f(x)=(x+b)^3$ is the only function of this type:
\begin{qn}\label{qn:1}
Is $f(x)=x^3$ the only polynomial function on a Galois field such
that $\Delta_{fa}(x)$ is a planar function?
\end{qn}

Two sets of MUBs, $\mathcal{B}=\{B_0,B_1,\dots,B_d\}$ and $\mathcal{C}=\{C_0,C_1,\dots,C_d\}$, written as matrices, are \emph{equivalent} \cite[App A]{BWB2010} if either $\mathcal{B}$ or $\mathcal{B}^*$ is equal to $ \{UC_0D_0P_0, UC_1D_1P_1,\dots ,UC_dD_dP_d\}$ for some  unitary matrix, $U$, 
 unitary diagonal matrices, $D_i$,  and permutation matrices, $P_i$.

Godsil and Roy  \cite{GR09} have shown that the Alltop construction produces MUBs that are equivalent to the set of MUBs constructed using $\Pi(x)=x^2$ in the  Planar function construction, which naturally leads to the following question:
\begin{qn}
If another function exists such that $\Delta_{fa}(x)$ is planar, will the sets of MUBs constructed be equivalent?
\end{qn}

Any function which meets  the criteria of Question
\ref{qn:1} will hence forth be called an \emph{Alltop} type
function.
\begin{defi}An \emph{Alltop} type polynomial is a polynomial, $f$, such that for each $a\in\mathbb{F}_q^*$
\begin{equation}
\Delta_{fa}(x)=\Pi_a(x)
\end{equation}
 for some planar polynomial $\Pi_a$.
\end{defi}

We investigate Question \ref{qn:1} establishing that Alltop type functions cannot exist on fields of characteristic 3, and show that for the class of planar functions of the form $\Pi(x)=x^{p^k+1}$ with $p$ a prime, $f(x)=x^3$ is the only Alltop type function.

\section{Preliminaries}
We begin with some preliminary  results concerning
polynomials.    The following properties of binomial expansions will be used in calculating $\Delta_{fa}(x)$.

\begin{lem}\cite[Prop. 8]{Sing80}\label{nondivbinom}
Let $n = \sum a_ip^i$  and $k = \sum b_ip^i$  with $0\leq
a_i,b_i \leq p$. Then  $p \nmid \binom{n}{k}$ if and only
if $0 \leq b_i \leq a_i$ for all $i$.
\end{lem}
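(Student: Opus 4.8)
The plan is to derive this from the classical generating-function proof of Lucas' theorem, carried out in the polynomial ring $\mathbb{F}_p[x]$. First I would record the elementary fact that $p \mid \binom{p}{j}$ for $0 < j < p$, which gives $(1+x)^p \equiv 1 + x^p \pmod p$; iterating the Frobenius endomorphism then yields $(1+x)^{p^i} \equiv 1 + x^{p^i} \pmod p$ for every $i \ge 0$.

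Next, writing $n = \sum_i a_i p^i$, I would expand
\begin{equation}
(1+x)^n = \prod_i \left((1+x)^{p^i}\right)^{a_i} \equiv \prod_i \left(1 + x^{p^i}\right)^{a_i} \pmod p
\end{equation}
and compare the coefficient of $x^k$ on the two sides. On the left it is $\binom{n}{k}$. On the right, since $k = \sum_i b_i p^i$ with $0 \le b_i \le p-1$ is the base-$p$ expansion of $k$, the only way to produce the monomial $x^k$ is to take $b_i$ copies of $x^{p^i}$ from the factor $\left(1+x^{p^i}\right)^{a_i}$ for each $i$, contributing $\binom{a_i}{b_i}$; hence $\binom{n}{k} \equiv \prod_i \binom{a_i}{b_i} \pmod p$.

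The conclusion follows immediately: $p \nmid \binom{n}{k}$ if and only if $p \nmid \prod_i \binom{a_i}{b_i}$, i.e.\ if and only if $\binom{a_i}{b_i} \not\equiv 0 \pmod p$ for every $i$. To finish I would verify that, for $0 \le a_i \le p-1$, one has $\binom{a_i}{b_i} \not\equiv 0 \pmod p$ exactly when $0 \le b_i \le a_i$: if $b_i > a_i$ then $\binom{a_i}{b_i} = 0$, while if $b_i \le a_i \le p-1$ then the identity $\binom{a_i}{b_i}\, b_i!\,(a_i-b_i)! = a_i!$ has a right-hand side coprime to $p$ (a product of integers smaller than $p$), forcing $\binom{a_i}{b_i}$ to be coprime to $p$ as well.

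The step I expect to need the most care is the coefficient extraction in the product $\prod_i \left(1+x^{p^i}\right)^{a_i}$: it relies on the uniqueness of the base-$p$ digits of $k$ and on $b_i \le p-1$, so that no ``carrying'' between different factors can produce an $x^k$ term in a second way. (If one reads the hypothesis literally with a digit allowed to equal $p$, the ``if'' direction breaks down, since $\binom{p}{j} \equiv 0 \pmod p$ for $0 < j < p$; so the digits should be taken to be the genuine base-$p$ expansions.) As an alternative I could instead invoke Kummer's theorem --- the exact power of $p$ dividing $\binom{n}{k}$ equals the number of carries in the base-$p$ addition $k + (n-k)$ --- which delivers the equivalence directly, but the generating-function argument above is shorter and self-contained.
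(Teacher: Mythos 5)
The paper does not prove this lemma at all: it is quoted verbatim from Singmaster (Prop.~8) as a known divisibility criterion, essentially Lucas' theorem, so there is no internal proof to compare against. Your generating-function argument is a correct and standard proof of that theorem: $(1+x)^{p^i}\equiv 1+x^{p^i}\pmod p$ by iterating the Frobenius/binomial fact $p\mid\binom{p}{j}$ for $0<j<p$, then $(1+x)^n\equiv\prod_i(1+x^{p^i})^{a_i}$, and the coefficient extraction giving $\binom{n}{k}\equiv\prod_i\binom{a_i}{b_i}\pmod p$ is sound precisely because each factor contributes an exponent $c_ip^i$ with $0\le c_i\le a_i\le p-1$, so uniqueness of base-$p$ representation forces $c_i=b_i$; the final digit-level equivalence $\binom{a_i}{b_i}\not\equiv 0\pmod p\iff b_i\le a_i$ is also argued correctly. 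Your parenthetical caveat is a genuine (if minor) correction to the statement as printed: with the bound ``$0\le a_i,b_i\le p$'' taken literally the claim fails (e.g.\ $n=p$ written with $a_0=p$, $k=1$ gives $b_0\le a_0$ yet $p\mid\binom{p}{1}$), so the digits must be the true base-$p$ digits with $a_i,b_i\le p-1$, which is clearly what Singmaster and the authors intend and is how the lemma is used later (comparing the digits of $p^s$ and $n=p^sm$).
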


\begin{lem}\cite[Cor 19.1]{Sing80}\label{primedivBinom}
If $p^s | n$ and $(k,p) = 1$, then $p^s | \binom{n}{k}$.
\end{lem}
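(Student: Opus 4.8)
The plan is to avoid the digit-counting machinery of Lemma~\ref{nondivbinom} entirely and instead exploit the elementary absorption identity
\[
k\binom{n}{k}=n\binom{n-1}{k-1},
\]
valid for all integers $n\ge 1$ and $k\ge 1$. First I would dispose of the degenerate ranges of $k$: the hypothesis $(k,p)=1$ forces $k\neq 0$, and if $k>n$ then $\binom{n}{k}=0$, which is divisible by $p^s$ trivially. Hence it suffices to treat $1\le k\le n$, where the identity applies.

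For the main case, $p^s\mid n$ immediately gives $p^s\mid n\binom{n-1}{k-1}$, and therefore $p^s\mid k\binom{n}{k}$ by the displayed identity. Since $(k,p)=1$, no prime factor of $k$ divides $p^s$, so $(k,p^s)=1$; applying Euclid's lemma (equivalently, unique factorisation in $\mathbb{Z}$) we may cancel the factor $k$ from $p^s\mid k\binom{n}{k}$ to conclude $p^s\mid\binom{n}{k}$, as required.

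I expect essentially no obstacle: the only points requiring a moment's care are the book-keeping on the boundary values of $k$ and the observation that coprimality to $p$ upgrades to coprimality to $p^s$. As an alternative route one could instead invoke Kummer's theorem — the $p$-adic valuation of $\binom{n}{k}$ equals the number of carries when adding $k$ to $n-k$ in base $p$ — and note that $p^s\mid n$ together with $p\nmid k$ forces a carry out of each of the bottom $s$ base-$p$ digits, yielding at least $s$ carries; but the absorption-identity argument is shorter and self-contained, so I would present that one.
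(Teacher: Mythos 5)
Your argument is correct. The paper itself gives no proof of this lemma --- it is simply cited from Singmaster's Corollary 19.1 --- so there is no in-paper argument to match; what you offer is a clean, self-contained justification. Your route via the absorption identity $k\binom{n}{k}=n\binom{n-1}{k-1}$ is exactly the identity the paper records separately as Lemma~\ref{binomRed}, so your proof has the pleasant feature of deriving the citation from material already on hand: from $p^s\mid n$ you get $p^s\mid k\binom{n}{k}$, and since $(k,p)=1$ implies $(k,p^s)=1$ you may cancel $k$. Your handling of the boundary cases ($k=0$ excluded by coprimality, $k>n$ trivial) is fine, and the alternative via Kummer's carry-counting theorem is also sound, though, as you say, heavier than needed; note that the digit-counting Lemma~\ref{nondivbinom} of the paper only governs divisibility by $p$ rather than by $p^s$, so avoiding it, as you do, is the right call.
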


\begin{lem}\cite[Cor 10.2] {Sing80}\label{binomprime}
\begin{equation}\binom{p^s}{k} = \left \{ \begin{array}{l l}
 1 \pmod{p} & \mathrm{if}\; k \in \{0, p^s\}\\
 0 \pmod{p} & \mathrm{if}\; 1 \leq k \leq p^s - 1
 \end{array} \right .
\end{equation}
\end{lem}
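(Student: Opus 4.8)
The plan is to obtain the statement as a direct corollary of Lemma~\ref{nondivbinom}, handling the two boundary values of $k$ separately. For $k=0$ and $k=p^s$ we have $\binom{p^s}{k}=1$, which is $1\pmod p$; this settles the first case, and it is genuinely an equality rather than merely a non-divisibility statement, which is what the claim ``$1\pmod p$'' requires. For $1\le k\le p^s-1$ it then remains only to prove $p\mid\binom{p^s}{k}$.

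First I would write down the base-$p$ expansion of $n=p^s$: its digits are $a_s=1$ and $a_i=0$ for every $i\ne s$. Expanding $k=\sum_i b_i p^i$ in base $p$ with $0\le b_i\le p-1$, the constraint $1\le k\le p^s-1$ forces $b_i=0$ for all $i\ge s$ while $b_j\ge 1$ for at least one index $j<s$. By Lemma~\ref{nondivbinom}, $p\nmid\binom{p^s}{k}$ would require $0\le b_i\le a_i$ for all $i$; evaluating this at $i=j$ yields $1\le b_j\le a_j=0$, a contradiction. Hence $p\mid\binom{p^s}{k}$, completing the proof.

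There is no serious obstacle here: the result is essentially a restatement of Lemma~\ref{nondivbinom} specialised to $n=p^s$. The only points needing care are matching the digit convention of Lemma~\ref{nondivbinom} (its bound $0\le a_i,b_i\le p$ should be read as the ordinary base-$p$ representation, so that the expansion of $p^s$ is the one used above) and, as noted, treating the two extreme values of $k$ by hand. If one preferred a self-contained argument, the identity $k\binom{p^s}{k}=p^s\binom{p^s-1}{k-1}$ also works: writing $k=p^t m$ with $\gcd(m,p)=1$ and $t\le s-1$ in the range $1\le k\le p^s-1$, comparing $p$-adic valuations gives $v_p\!\left(\binom{p^s}{k}\right)\ge s-t\ge 1$, and the case $t=0$ recovers the $n=p^s$ instance of Lemma~\ref{primedivBinom}.
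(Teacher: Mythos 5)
Your argument is correct. Note, though, that the paper does not prove this lemma at all: it is quoted directly from Singmaster \cite[Cor 10.2]{Sing80}, so there is no in-paper proof to compare against. What you supply is a legitimate self-contained derivation from the paper's other preliminaries: the boundary values $k\in\{0,p^s\}$ give $\binom{p^s}{k}=1$ exactly, and for $1\le k\le p^s-1$ the base-$p$ digits of $k$ must include some $b_j\ge 1$ with $j<s$ while the corresponding digit of $p^s$ is $0$, so Lemma \ref{nondivbinom} forces $p\mid\binom{p^s}{k}$. Your remark about reading the digit bound in Lemma \ref{nondivbinom} as $0\le a_i,b_i\le p-1$ (the paper's ``$\le p$'' is evidently a slip) is the right way to apply it. The alternative via $k\binom{p^s}{k}=p^s\binom{p^s-1}{k-1}$ is also sound: with $k=p^tm$, $\gcd(m,p)=1$ and $t\le s-1$, comparing $p$-adic valuations gives $v_p\bigl(\binom{p^s}{k}\bigr)\ge s-t\ge 1$, and it has the mild advantage of only using Lemma \ref{binomRed} and divisibility, in the same spirit as the paper's own use of these identities in Theorem \ref{thm:m-1}.
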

\begin{cor}\label{lem}
\begin{equation}\binom{p^s+1}{k}  \begin{array}{l l}
 \neq 0 \pmod{p} & \mathrm{if}\; k \in \{ 0,1,p^s,p^s+1\} \\
 =0 \pmod{p} & \mathrm{if}\; 2 \leq k \leq p^s - 1
 \end{array}
\end{equation}
\begin{equation}\binom{p^s+2}{k}  \begin{array}{l l}
 \neq 0 \pmod{p} & \mathrm{if}\; k \in \{ 0,1,2,p^s,\\
 & \quad\quad\quad p^s+1,p^s+2\} \\
 =0 \pmod{p} & \mathrm{if}\; 3 \leq k \leq p^s - 1
 \end{array}
\end{equation}
\end{cor}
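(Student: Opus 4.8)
The plan is to derive both congruences directly from Lemma~\ref{nondivbinom}, the base-$p$ (Lucas-type) non-divisibility criterion, and to double-check the boundary values by the slightly more computational route of Pascal's rule combined with Lemma~\ref{binomprime}. Throughout I assume $p$ is odd, as elsewhere in the paper; this is exactly what makes $2$ a single base-$p$ digit, and hence makes the stated exceptional set for $\binom{p^s+2}{k}$ correct.

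First, for $\binom{p^s+1}{k}$: write $n=p^s+1$ in base $p$, so its only nonzero digits are $a_s=1$ and $a_0=1$. By Lemma~\ref{nondivbinom}, $p\nmid\binom{p^s+1}{k}$ precisely when every base-$p$ digit $b_i$ of $k$ satisfies $b_i\le a_i$; this forces $b_i=0$ for $i\notin\{0,s\}$ and $b_0,b_s\in\{0,1\}$, i.e. $k\in\{0,1,p^s,p^s+1\}$. Every other $k$ with $0\le k\le p^s+1$, equivalently $2\le k\le p^s-1$, then has $p\mid\binom{p^s+1}{k}$. As a sanity check, Pascal's rule gives $\binom{p^s+1}{k}=\binom{p^s}{k}+\binom{p^s}{k-1}$; for $2\le k\le p^s-1$ both lower indices lie in $[1,p^s-1]$, so Lemma~\ref{binomprime} makes both terms vanish mod $p$, while at each exceptional $k$ exactly one surviving term is $\binom{p^s}{0}$ or $\binom{p^s}{p^s}$ and the sum is $\equiv 1\pmod p$.

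Second, for $\binom{p^s+2}{k}$: since $p$ is odd, $n=p^s+2$ has base-$p$ digits $a_s=1$, $a_0=2$, and all others $0$. Lemma~\ref{nondivbinom} then gives $p\nmid\binom{p^s+2}{k}$ exactly when $b_i=0$ for $i\notin\{0,s\}$, $b_s\in\{0,1\}$ and $b_0\in\{0,1,2\}$, i.e. $k\in\{0,1,2,p^s,p^s+1,p^s+2\}$; for all remaining $k$, namely $3\le k\le p^s-1$, one has $p\mid\binom{p^s+2}{k}$. This is confirmed by the twofold Pascal identity $\binom{p^s+2}{k}=\binom{p^s}{k}+2\binom{p^s}{k-1}+\binom{p^s}{k-2}$ together with Lemma~\ref{binomprime}: for $3\le k\le p^s-1$ all three indices lie in $[1,p^s-1]$ and every term vanishes mod $p$, whereas each exceptional value of $k$ leaves a surviving $\binom{p^s}{0}$ or $\binom{p^s}{p^s}$ term with coefficient $1$ or $2$, both nonzero mod $p$ because $p$ is odd.

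There is essentially no hard step; the only point to watch is the hypothesis $p\ge 3$, without which $2$ is not a single base-$p$ digit (and the coefficient $2$ in the Pascal identity could vanish mod $p$), changing the exceptional set for $\binom{p^s+2}{k}$. One should also note the degenerate small cases: the range $2\le k\le p^s-1$ is empty when $p^s=2$ and $3\le k\le p^s-1$ is empty when $p^s\le 3$, in which cases the ``$\equiv 0 \pmod p$'' assertions are vacuous.
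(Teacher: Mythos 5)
Your argument is correct, and it actually contains what the paper itself intends: the paper gives no written proof of Corollary~\ref{lem}, presenting it as an immediate consequence of Lemma~\ref{binomprime}, and the implicit derivation is exactly your ``sanity check'' via Pascal's rule, $\binom{p^s+1}{k}=\binom{p^s}{k}+\binom{p^s}{k-1}$ and $\binom{p^s+2}{k}=\binom{p^s}{k}+2\binom{p^s}{k-1}+\binom{p^s}{k-2}$, with Lemma~\ref{binomprime} killing or keeping the individual terms and oddness of $p$ keeping the coefficient $2$ alive. Your primary route through Lemma~\ref{nondivbinom} is a genuinely different and somewhat cleaner derivation: reading off the base-$p$ digits of $p^s+1$ and $p^s+2$ produces the full exceptional sets $\{0,1,p^s,p^s+1\}$ and $\{0,1,2,p^s,p^s+1,p^s+2\}$ in one stroke rather than by boundary-value checking, and it generalises at once to $\binom{p^s+c}{k}$ for any $c<p$, which is a nice bonus the Pascal computation does not give as directly.

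One small caveat worth making explicit: both of your arguments tacitly assume $s\geq 1$ in the second display (the digit description $a_s=1$, $a_0=2$ needs distinct digit positions, and the ``exactly one surviving term'' step of the Pascal check needs $p^s\geq 3$). For $s=0$ the second statement concerns $\binom{3}{k}$; this is still fine for $p\geq 5$ (e.g.\ $\binom{3}{1}=3\not\equiv 0$), but the nondivisibility claim genuinely fails for $p=3$ since $\binom{3}{1}\equiv\binom{3}{2}\equiv 0\pmod 3$ --- a corner case not covered by your closing remark, which only addresses vacuity of the ``$\equiv 0$'' ranges. This is harmless for the paper, where the corollary is invoked with exponent $p^k$, $k\geq 1$, and characteristic $3$ is excluded by Theorem~\ref{thm:char3}, but your stated hypothesis ``$p\geq 3$'' alone is not quite enough to make the second congruence literally true.
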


\begin{lem}\label{binomRed}
$k \binom{n}{k} = n \binom{n-1}{k-1}$.
\end{lem}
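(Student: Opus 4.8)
The plan is to prove the identity by a direct manipulation of factorials, after disposing of the degenerate cases. First I would note that if $k=0$ then both sides are $0$ (adopting the usual convention $\binom{n-1}{-1}=0$), and if $k>n$ then $\binom{n}{k}=0$ and $\binom{n-1}{k-1}=0$, so both sides vanish; hence it suffices to treat $1\le k\le n$.

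For $1\le k\le n$ I would write $\binom{n}{k}=\frac{n!}{k!\,(n-k)!}$ and compute
\begin{equation*}
k\binom{n}{k}=\frac{k\cdot n!}{k!\,(n-k)!}=\frac{n!}{(k-1)!\,(n-k)!},
\end{equation*}
using $k/k!=1/(k-1)!$. Then, extracting a factor of $n$ from $n!=n\cdot(n-1)!$ and observing that $(n-1)-(k-1)=n-k$, the right-hand side equals
\begin{equation*}
n\cdot\frac{(n-1)!}{(k-1)!\,\big((n-1)-(k-1)\big)!}=n\binom{n-1}{k-1},
\end{equation*}
which is the claim.

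An alternative I would keep in reserve is the combinatorial double-counting argument: both sides count the number of ways to select a $k$-element subset of an $n$-element set together with a distinguished element of that subset (for the left-hand side choose the subset and then the distinguished element; for the right-hand side choose the distinguished element first from among the $n$, then the remaining $k-1$ members from the other $n-1$). I do not anticipate any real obstacle here; the only point needing a word of care is the boundary convention at $k=0$, which is why I separate it out at the outset.
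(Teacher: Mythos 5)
Your proof is correct. The paper states this lemma as a standard fact without giving any proof, so there is nothing to compare against: your factorial manipulation (with the boundary cases $k=0$ and $k>n$ handled by convention) is exactly the routine verification one would supply, and the double-counting argument you hold in reserve is equally valid.
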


Using these preliminary facts, we can calculate a few properties of $\Delta_{fa}(x)$ when $f$ is a monomial.

\begin{lem}\label{constant}
 Let
 $\mathbb{F}_q$ be a field of characteristic $p$ and $f(x)=x^{n}$ with 
 $n = p^s$, where $s \geq 0$. Then $\Delta_{fa}(x)$
  is constant for all $a\in\mathbb{F}_q^*$.
\end{lem}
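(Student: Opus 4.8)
The plan is to expand $f(a+x)=(a+x)^{p^{s}}$ with the binomial theorem and observe that in characteristic $p$ only the two end terms survive. Concretely, I would write
\begin{equation}
\Delta_{fa}(x)=f(a+x)-f(x)=(a+x)^{p^{s}}-x^{p^{s}}=\sum_{k=0}^{p^{s}}\binom{p^{s}}{k}a^{\,p^{s}-k}x^{k}-x^{p^{s}},
\end{equation}
and then invoke Lemma \ref{binomprime}: for $1\le k\le p^{s}-1$ we have $\binom{p^{s}}{k}\equiv 0\pmod p$, so every middle summand vanishes in $\mathbb{F}_q$. What remains is the $k=0$ term $a^{p^{s}}$ and the $k=p^{s}$ term $x^{p^{s}}$, the latter of which cancels against the subtracted $x^{p^{s}}$. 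Hence $\Delta_{fa}(x)=a^{p^{s}}$ for every $x$, which is the required constant. (Equivalently this is just the Frobenius identity $(a+x)^{p^{s}}=a^{p^{s}}+x^{p^{s}}$, but phrasing it through Lemma \ref{binomprime} keeps the argument self-contained within the paper's toolkit.)

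The only points needing a word of care are the boundary cases. When $s=0$ we have $n=1$, $f(x)=x$, and $\Delta_{fa}(x)=(a+x)-x=a$, so the claim holds trivially; the binomial display above still reads correctly in this degenerate range. For $s\ge 1$ the computation is exactly as above. One may also remark that the constant $a^{p^{s}}$ is in fact nonzero for $a\in\mathbb{F}_q^{*}$, since the Frobenius map is injective on $\mathbb{F}_q$, although only constancy is asserted in the statement.

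I do not anticipate a genuine obstacle here: the result is a one-line consequence of the vanishing of interior binomial coefficients modulo $p$, and the lemma is stated precisely so that it can be quoted in the sequel (e.g. to dispose of the $f(x)=x^{p^{s}}$ cases when classifying Alltop type monomials). The main thing to get right in the write-up is simply to cite Lemma \ref{binomprime} rather than silently using ``freshman's dream,'' and to handle $s=0$ explicitly so the statement's hypothesis $s\ge 0$ is fully covered.
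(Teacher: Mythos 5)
Your proof is correct and follows essentially the same route as the paper: a binomial expansion of $(x+a)^{p^s}-x^{p^s}$ combined with Lemma \ref{binomprime} to annihilate the interior terms, leaving the constant $a^{p^s}$. The explicit treatment of $s=0$ and the remark that the constant is nonzero are harmless additions but not needed.
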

\begin{proof}By the Taylor's expansion
$\Delta_{fa}(x) = \sum_{i = 1}^n \binom{n}{i}a^i x^{n-i}$.
By  Lemma \ref{binomprime} $p | \binom{n}{k} $ for all
$k\in\{1,\dots,p^s-1\}$ hence $(x + a )^n - x^n$ is
constant.
\end{proof}

\begin{thm}\label{thm:m-1}
Let $\mathbb{F}_q$ be a field of  characteristic $p$ and $f(x)=x^n$ with $n =
p^s m$ where $s \geq 0, m > 1$ and $(m,p) = 1$. Then $\Delta_{fa}(x)$ has degree $p^s (m - 1)$ for all $a\in\mathbb{F}_q^*$.
\end{thm}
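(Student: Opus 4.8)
The plan is to strip off the $p$-power part of the exponent using the Frobenius endomorphism and then read the leading term off directly. First I would write $\Delta_{fa}(x) = (x+a)^{p^s m} - x^{p^s m}$ and use that, in characteristic $p$, $(x+a)^{p^s} = x^{p^s} + a^{p^s}$, so that $(x+a)^{p^s m} = \left(x^{p^s} + a^{p^s}\right)^m$. Setting $y := x^{p^s}$ and $b := a^{p^s}$, and noting $b \neq 0$ since $a \in \mathbb{F}_q^*$, this expresses $\Delta_{fa}$ as the polynomial $(y+b)^m - y^m$ in the new variable $y$.

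Next I would expand $(y+b)^m - y^m = \sum_{j=1}^{m} \binom{m}{j} b^j y^{m-j}$ and isolate the $j=1$ term, namely $\binom{m}{1} b\, y^{m-1} = m b\, y^{m-1}$. Since $(m,p) = 1$ we have $m \not\equiv 0 \pmod p$, so $mb$ is a nonzero element of $\mathbb{F}_q$, whence this polynomial in $y$ has degree exactly $m-1$. Re-substituting $y = x^{p^s}$ and using that composing a polynomial of degree $m-1$ in $y$ with $y = x^{p^s}$ produces a polynomial of degree $p^s(m-1)$ in $x$ (the leading term $mb\,x^{p^s(m-1)}$ cannot be cancelled), the claim follows. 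The excluded case $m = 1$ is precisely Lemma \ref{constant}.

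A more computational alternative bypasses the Frobenius trick: by Taylor expansion $\Delta_{fa}(x) = \sum_{i=1}^{n}\binom{n}{i} a^i x^{n-i}$, so the degree is $n - i_0$, where $i_0$ is the least $i \geq 1$ with $p \nmid \binom{n}{i}$. Writing $n = p^s m$ in base $p$, the digits of $n$ in positions $0,\dots,s-1$ all vanish while the digit in position $s$ is the last base-$p$ digit of $m$, which is nonzero because $(m,p)=1$; Lemma \ref{nondivbinom} then forces $p^s \mid i_0$ and shows $i = p^s$ works, so $i_0 = p^s$ and the degree is $p^s m - p^s = p^s(m-1)$.

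I do not expect a serious obstacle. The only points needing care are verifying that the leading coefficient $mb$ (equivalently $\binom{n}{p^s}a^{p^s}$) is genuinely nonzero in $\mathbb{F}_q$ — which is exactly where both hypotheses $(m,p)=1$ and $a \in \mathbb{F}_q^*$ enter — and that the substitution $y \mapsto x^{p^s}$ multiplies the degree by $p^s$ rather than collapsing the top term.
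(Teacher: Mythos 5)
Your proposal is correct, and your primary argument takes a genuinely different (and shorter) route than the paper. The paper works directly with the Taylor expansion $\Delta_{fa}(x)=\sum_{i=1}^n\binom{n}{i}a^ix^{n-i}$ and identifies $i=p^s$ as the first index with $p\nmid\binom{n}{i}$ via a case split on $s=0$, $s=1$, $s>1$: it kills the coefficients with $1\leq i<p^s$ using Lemma \ref{primedivBinom} together with the identity of Lemma \ref{binomRed} (handling $i=p^kt$, $k<s$, $(t,p)=1$ separately), and then invokes the base-$p$ digit criterion of Lemma \ref{nondivbinom} to see $p\nmid\binom{n}{p^s}$. Your Frobenius argument bypasses all of this: writing $(x+a)^{p^sm}=\left(x^{p^s}+a^{p^s}\right)^m$ and expanding in $y=x^{p^s}$ gives $\Delta_{fa}(x)=\sum_{j=1}^m\binom{m}{j}a^{p^sj}x^{p^s(m-j)}$ at once, with leading coefficient $m\,a^{p^s}\neq 0$ exactly because $(m,p)=1$ and $a\neq 0$, and no cancellation is possible since distinct $j$ give distinct exponents. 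This is cleaner, needs none of the binomial-divisibility lemmas, pins down the leading coefficient explicitly, and shows as a by-product that $\Delta_{fa}$ is a polynomial in $x^{p^s}$; its only cost is that it does not exercise the lemmas the paper reuses elsewhere. Your fallback argument is essentially the paper's proof, but streamlined: Lemma \ref{nondivbinom} alone handles both directions (every $1\leq i<p^s$ has a nonzero digit below position $s$ where $n$ has digit $0$, while $i=p^s$ fits under the nonzero digit of $n$ in position $s$), eliminating the case analysis and the appeal to Lemmas \ref{primedivBinom} and \ref{binomRed}. Both versions are sound; the only care points you flag (nonvanishing of $m\,a^{p^s}$, resp. $\binom{n}{p^s}a^{p^s}$, and that $y\mapsto x^{p^s}$ cannot collapse the top term) are handled correctly, and the excluded case $m=1$ is indeed Lemma \ref{constant}.
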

\begin{proof} Let $f(x) = x^n$  then by the Taylor's expansion
$\Delta_{fa}(x) = \sum_{i = 1}^n \binom{n}{i}a^i x^{n-i}$.
We need to show that the first  non-zero coefficient in
this binomial expansion is $\binom{n}{p^s}$.

We start with $s = 0$. Then $n$ and $p$ are co-prime and
$\binom{n}{p^s} = \binom{n}{1} = n$ which is not divisible
by $p$.

Next consider $s > 0$ and recall $m > 1$.

If $s = 1$, then $n = pm$ and, by Lemma \ref{primedivBinom},  $p | \binom{n}{i} $  for $1
\leq i \leq p -1$ but $ p \nmid \binom{n}{p}$ which is the coefficient of $x^{n-p} = x^{p(m-1)}$ and hence
$\Delta_{fa}(x)$ has degree $p(m-1)$.

For $s > 1$, it is clear from Lemma \ref{primedivBinom}
that $p | \binom{n}{i} $ for all $i$ such that $(p,i) = 1$.
The question that remains is whether $p | \binom{n}{i} $
for those $i < p^s$ with $i,p$ not co-prime.

Let $i = p^k t$ for $1 \leq k < s$ and $(p,t) = 1, t \geq 1$.

By Lemmas \ref{primedivBinom} and \ref{binomRed},
\begin{align}
\binom{n}{p^kt} & =\frac{n}{p^kt} \binom{n - 1}{p^kt - 1} \\
             & =\frac{p^{s-k}m}{t} \binom{n - 1}{p^kt - 1}\\
             & =  0 \pmod{p}.
\end{align}

Since $m$ and $p^s$ are co-prime, $m = \sum_{i = 0}^j a_ip^i$
where $a_0 \geq 1$. Hence $n = p^s m = \sum_{i = s}^{s + j}
a_{i-s}p^i$, whereas $p^s = 1.p^s + \sum_{i \neq s} 0\times
p^i$. Using Lemma \ref{nondivbinom} we find that $p \nmid
\binom{n}{p^s}$.  Thus $\Delta_{fa}(x)$ has degree $p^s (m - 1)$.
\end{proof}

\begin{cor}\label{cor:degree} 
Let $a\in\mathbb{F}_{p^r}^*$.  If $\Delta_{fa}(x)\in\mathbb{F}_{p^r}[x]$ has degree $p^sl$, where $(l,p)=1$ and $0\leq s\leq r$, then \[f(x)=g(x)+\sum_{t=0}^{s}b_tx^{p^t(p^{s-t}l+1)}\]
 where at least one of $b_t\in\mathbb{F}_{p^r}^*$, and $g(x)$ is  such that   $\Delta_{ga}(x)$ is of degree  less than $p^sl$.
\end{cor}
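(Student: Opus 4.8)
The plan is to run the argument of Theorem~\ref{thm:m-1} in reverse: instead of starting from a monomial $x^n$ and locating the leading term of $\Delta_{fa}$, I start from the prescribed leading degree $p^sl$ of $\Delta_{fa}$ and ask which monomials $x^n$ in $f$ can contribute a term of exactly that degree. First I would recall from the Taylor expansion that for a monomial $x^n$ the difference operator gives $\Delta_{(x^n)a}(x)=\sum_{i=1}^{n}\binom{n}{i}a^ix^{n-i}$, so $x^n$ contributes to degree $p^sl$ precisely when $n-i=p^sl$ for some $i$ with $1\le i\le n$ and $p\nmid\binom{n}{i}$. Writing $n=p^sl+i$, I must determine for which $i$ the binomial coefficient $\binom{p^sl+i}{i}$ is nonzero mod $p$. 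Using Lemma~\ref{binomRed} to write $i\binom{n}{i}=n\binom{n-1}{i-1}$ together with Lemmas~\ref{nondivbinom} and \ref{primedivBinom}, I expect to show that the only surviving values are $i=p^t$ for $0\le t\le s$: indeed if $i$ contains a prime-power factor $p^k$ with $k<s$ and any further nontrivial factor, the $p$-adic valuation argument of Theorem~\ref{thm:m-1} forces divisibility by $p$, while $i=p^t$ with $t\le s$ gives $n=p^t(p^{s-t}l+1)$ and a Lucas-type digit check (Lemma~\ref{nondivbinom}) shows $p\nmid\binom{n}{p^t}$.

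Having identified that the monomials of $f$ which can produce a degree-$p^sl$ term in $\Delta_{fa}$ are exactly the $x^{p^t(p^{s-t}l+1)}$ for $0\le t\le s$, I would then split $f(x)$ as $f(x)=g(x)+\sum_{t=0}^{s}b_tx^{p^t(p^{s-t}l+1)}$, where $g$ collects all remaining monomials. By construction every monomial in $g$ contributes only to degrees strictly below $p^sl$ in its own difference function, so $\Delta_{ga}(x)$ has degree less than $p^sl$; linearity of $\Delta_{(\cdot)a}$ then gives $\Delta_{fa}(x)=\Delta_{ga}(x)+\sum_{t=0}^{s}b_t\,\Delta_{(x^{p^t(p^{s-t}l+1)})a}(x)$. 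Since $\Delta_{fa}$ was assumed to have degree exactly $p^sl$ and $\Delta_{ga}$ has smaller degree, the top-degree part must come from the sum, so at least one $b_t$ is nonzero, which is the claimed conclusion.

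The main obstacle I anticipate is the combinatorial bookkeeping in the reverse direction: in Theorem~\ref{thm:m-1} one fixes $n$ and scans $i$, whereas here one fixes the output degree $p^sl$ and must scan $n$, so the non-divisibility analysis of $\binom{n}{i}$ has to be redone with $n=p^sl+i$ varying with $i$. The delicate case is bounding which $i$ with $p\mid i$ but $i$ not a pure power of $p$ can still satisfy $p\nmid\binom{p^sl+i}{i}$; here I would write $i$ in base $p$ and compare digit-by-digit with $p^sl+i$ using Lemma~\ref{nondivbinom}, carefully handling carries in the addition $p^sl+i$. A secondary subtlety is the edge behaviour at $t=s$ (where $p^t(p^{s-t}l+1)=p^s(l+1)$) and at $s=r$, but these are covered by the same digit argument and by Corollary~\ref{lem}. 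Once the divisibility lemma is pinned down the rest is just linearity of the difference operator and matching of degrees.
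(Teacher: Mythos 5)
Your overall plan---inverting Theorem~\ref{thm:m-1}, splitting $f$ into the special monomials plus a remainder $g$, and finishing by linearity of $\Delta_{(\cdot)a}$---is in the same spirit as the paper's (one-line) proof, but the combinatorial lemma you hang it on is false. You claim that $p\nmid\binom{p^sl+i}{i}$ forces $i=p^t$ with $0\le t\le s$, i.e.\ that the \emph{only} monomials whose difference function has a term of degree $p^sl$ are the $x^{p^t(p^{s-t}l+1)}$. Already for $s=0$, $l=1$, $p\ge 5$ this fails: $\Delta_{(x^3)a}(x)=3ax^2+3a^2x+a^3$ has a nonzero term of degree $1=p^sl$, although $3\neq 2=p^0(l+1)$. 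More generally, for $s\ge 1$ Lemma~\ref{nondivbinom} gives $p\nmid\binom{p^sl+2}{2}$ (the base-$p$ units digit of $p^sl+2$ is $2$), so $x^{p^sl+2}$ contributes the nonzero term $\binom{p^sl+2}{2}a^2x^{p^sl}$, yet $p^sl+2$ is not of the prescribed form $p^t(p^{s-t}l+1)$. Consequently your key step, ``by construction every monomial in $g$ contributes only to degrees strictly below $p^sl$ in its own difference function,'' does not follow from your classification; indeed monomials in $g$ may contribute at degree $p^sl$ and even above it, so the asserted bound $\deg\Delta_{ga}<p^sl$ is unproved.

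The repair is to classify monomials by the \emph{exact leading degree} of their difference function, which is precisely what the paper does. By Lemma~\ref{constant} and Theorem~\ref{thm:m-1}, a monomial $x^n$ with $n=p^tm$, $(m,p)=1$, $m>1$, has $\Delta_{(x^n)a}$ of degree exactly $p^t(m-1)$ (and $\Delta$ is constant when $n$ is a power of $p$). Equating $p^t(m-1)=p^sl$ and using $(l,p)=1$ forces $t\le s$ and $m=p^{s-t}l+1$, i.e.\ $n=p^t(p^{s-t}l+1)$; monomials whose difference has degree exceeding $p^sl$ are ruled out by the hypothesis $\deg\Delta_{fa}=p^sl$, since the special monomials (whose differences have degree exactly $p^sl$) cannot cancel their leading terms (the paper, like you, silently ignores possible cancellations among several such higher-degree monomials); everything else goes into $g$, whose difference then has degree below $p^sl$. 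In short, the relevant question is not which $x^n$ has \emph{some} term of degree $p^sl$ in $\Delta_{(x^n)a}$ --- answering that, as you attempt, requires a divisibility claim that is false --- but which $x^n$ has $\Delta_{(x^n)a}$ of degree \emph{exactly} $p^sl$, and that is read off directly from Theorem~\ref{thm:m-1}.
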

\begin{proof}
From Theorem \ref{thm:m-1}, if $f$ has degree $p^tm$ then $\Delta_{fa}(x)$ has degree $p^t(m-1)$.
\begin{align}
p^t(m-1)= & p^sl\\
m= &  p^{s-t}l+1.
\end{align}
Thus the possible monomials $f$ for which $\Delta_{fa}(x)$ has degree $p^sl$ are of degree $p^t(p^{s-t}l+1)$ where $0\leq t\leq s$.
\end{proof}

\begin{lem}\cite{CM97D}
Let $L(x)$ and $L'(x)$ be additive permutation polynomials, and $M(x)$ an additive polynomial on a field $\mathbb{F}$ of characteristic $p$.
 Let $\Pi'(x)=L'(\Pi(L(x)))+M(x)+\delta$.   If $\Pi$ is a planar function on a field $\mathbb{F}$, then $\Pi'$ is also a planar function on $\mathbb{F}$ for all $\delta\in \mathbb{F}$.
\end{lem}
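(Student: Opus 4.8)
The plan is to compute the difference function $\Delta_{\Pi',a}(x)=\Pi'(x+a)-\Pi'(x)$ explicitly for an arbitrary $a\in\mathbb{F}^*$ and to recognise it as a composition of bijections of $\mathbb{F}$. First I would expand $\Pi'$ and use the three additivity hypotheses. Setting $y:=L(x)$ and $b:=L(a)$, additivity of $L$ gives $L(x+a)=y+b$; additivity of $L'$ lets the subtraction be pulled inside $L'$; and additivity of $M$ gives $M(x+a)-M(x)=M(a)$, a constant. The term $\delta$ cancels. This yields
\[
\Delta_{\Pi',a}(x)=L'\bigl(\Pi(y+b)-\Pi(y)\bigr)+M(a)=L'\bigl(\Delta_{\Pi,b}(y)\bigr)+M(a).
\]

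Next I would track images. Since $L$ is a permutation of $\mathbb{F}$, it is injective with $L(0)=0$, so $a\neq 0$ forces $b=L(a)\neq 0$; hence by planarity of $\Pi$ the map $\Delta_{\Pi,b}$ is a bijection of $\mathbb{F}$. Also $y=L(x)$ ranges over all of $\mathbb{F}$ as $x$ does, so $x\mapsto\Delta_{\Pi,b}(L(x))$ is a bijection. Composing with the bijection $L'$ and then translating by the constant $M(a)$ preserves bijectivity, so $\Delta_{\Pi',a}$ is a bijection. As $a\in\mathbb{F}^*$ was arbitrary, $\Pi'$ is planar, for every $\delta\in\mathbb{F}$.

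The whole argument is bookkeeping with the additivity identities, so there is no serious obstacle; the one point that genuinely needs attention is that injectivity of $L$ is exactly what guarantees $L(a)\neq 0$, so that one may invoke the planarity of $\Pi$ at the nonzero shift $b=L(a)$ rather than (illegitimately) at $a$ itself. This lemma is the usual statement that the affine/linear equivalence class of a planar function consists of planar functions, and it will be used later to normalise candidate Alltop type functions built from $\Pi(x)=x^{p^k+1}$.
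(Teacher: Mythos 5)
Your proof is correct and is essentially the standard argument (the paper itself cites this lemma from Coulter--Matthews without proof): compute $\Delta_{\Pi',a}(x)=L'\bigl(\Delta_{\Pi,L(a)}(L(x))\bigr)+M(a)$ using additivity, note $L(a)\neq 0$ since $L$ is an additive bijection, and conclude that $\Delta_{\Pi',a}$ is a composition of bijections plus a constant. The key point you flag --- that planarity of $\Pi$ must be invoked at the nonzero shift $L(a)$, not at $a$ --- is exactly the right one.
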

The functions $\Pi$ and $\Pi'$ are considered equivalent \cite{CD04}. 
For a field of characteristic $p$, an additive polynomial has the shape 
\begin{equation}
M(x)=\sum_{i=0}^{k}a_ix^{p^i}.
\end{equation}
The families of planar functions are specified by conditions on the degree of the monomials which make up $\Pi$.  Hence we are only considering $L(x), L'(x)$ to have degree $1$. 
Any polynomial on $\mathbb{F}_q$ may be reduced modulo $x^q-x$ to yield a polynomial of degree less than $q$ which induces the same function on $\mathbb{F}_q$ \cite{CM97D}.  Hence we only consider polynomials of degree less that $q$.

With the aid of 
the preceding facts about polynomial expansions, we show that 
 no Alltop type functions exist in fields of characteristic $3$, and that a specific class of planar functions has a unique Alltop type function.

A more recent and extensive list of planar function can  be
found in \cite{PZ2010}.  New planar functions are
continually being discovered.  The results presented here are  not an exhaustive investigation, but show some promising directions for future work.

\section{Specific classes of planar functions}
It is known that planar functions do not exist in field of characteristic $2$ \cite{dem97}.  We show that Alltop type functions cannot exist in a field of characteristic $3$.
\begin{thm}\label{thm:char3}
There are no Alltop type polynomials over $\mathbb{F}_{3^r}$.
\end{thm}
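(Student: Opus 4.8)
The plan is to derive a contradiction from a single first difference being planar, without invoking any classification of planar functions. I would fix an arbitrary $a\in\mathbb{F}_{3^r}^{*}$ and set $\Pi:=\Delta_{fa}$, which by hypothesis is a planar polynomial. The crucial observation is that $\Pi$ is far from an arbitrary planar function: since $\mathrm{char}\,\mathbb{F}_{3^r}=3$ we have $3a=0$, hence $f(x+3a)=f(x)$, and a telescoping sum gives, for every $x$,
\[
\Pi(x)+\Pi(x+a)+\Pi(x+2a)=f(x+3a)-f(x)=0 .
\]

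Next I would feed this relation back into the planarity of $\Pi$. Planarity forces the first difference $u:=\Delta_{\Pi a}$, i.e.\ $u(x)=\Pi(x+a)-\Pi(x)$, to be a bijection of $\mathbb{F}_{3^r}$. On the other hand, replacing $x$ by $x+a$ in the displayed identity and subtracting yields $u(x)-u(x+a)=2\Pi(x+a)-\Pi(x)-\Pi(x+2a)=3\Pi(x+a)=0$, so $u$ is constant on every coset of the order-$3$ additive subgroup $\{0,a,2a\}$. In particular $u(0)=u(a)=u(2a)$ while $0,a,2a$ are pairwise distinct (here we use $a\neq 0$ and $\mathrm{char}=3$), so $u$ is not injective, contradicting the planarity of $\Pi$. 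Hence no Alltop type $f$ can exist over $\mathbb{F}_{3^r}$, and a single value of $a$ already suffices.

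I do not expect a genuine obstacle here; the argument is short and elementary, and the only point deserving emphasis is \emph{why} being a difference function is a real constraint in characteristic $3$. It is precisely the $3$-term telescoping relation above that fails for a generic planar function — for instance $\Pi(x)=x^{2}$ over $\mathbb{F}_{3^r}$ gives $\Pi(x)+\Pi(x+a)+\Pi(x+2a)=2a^{2}\neq 0$ — which is what creates room for the contradiction. It is also worth noting that this argument uses none of the binomial-coefficient or degree machinery developed in the preliminaries; those tools are needed instead for the subsequent analysis of the planar family $\Pi(x)=x^{p^{k}+1}$.
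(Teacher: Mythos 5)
Your argument is correct, and it is essentially the paper's own proof in a different dress: both hinge on the fact that in characteristic $3$ the second difference $\Delta_{\Pi a}=\Delta\Delta_{faa}$ equals $f(x)+f(x+a)+f(x+2a)$ (up to sign), which is invariant under $x\mapsto x+a$ because $3a=0$, hence not injective, contradicting planarity of $\Delta_{fa}$. The paper simply specializes to $a=b=1$ and compares the values at $x=0$ and $x=1$, while you keep $a$ general and phrase the same periodicity via the telescoping identity.
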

\begin{proof}
\begin{align}
\Delta_{fa}(x)= & f(x+a)-f(x)\\
\Delta\Delta_{fab}(x)= & f(x+a+b)-f(x+b)-f(x+a)\nonumber\\
 & +f(x).
\end{align}
In a field of characteristic $3$, $-1\equiv 2$, hence
\begin{align}
\Delta\Delta_{fab}(x)= & f(x+a+b)+2f(x+b)\nonumber\\
 & +2f(x+a)+f(x).
\end{align}
Let $a=b=1$ then
\begin{align}
\Delta\Delta_{f11}(x)= & f(x+2)+2f(x+1)+2f(x+1)\nonumber\\
& +f(x)\\
 =&  f(x+2)+f(x+1)+f(x).
\end{align}
Now  let $x=1,0$
\begin{align}
\Delta\Delta_{f11}(0)= & f(2)+f(1)+f(0),\\
\Delta\Delta_{f11}(1)= & f(0)+f(2)+f(1),\\
 = & \Delta\Delta_{f11}(0).
\end{align}
Hence $\Delta\Delta_{f11}(x)$ is not a permutation polynomial, $\Delta_{f1}(x)$ is not a planar function, and $f$ is not an Alltop type polynomial.
\end{proof}

This is of particular importance since while new planar functions are continually being discovered, half  of the known classes of planar functions exist only  on fields of characteristic $3$ \cite{PZ2010}.  On the other hand, the relative known abundance of planar functions on fields of  characteristic $3$, may be more a product of the ease of search, than the rarity of planar functions on fields of higher characteristic.

\begin{thm}\cite{DO68}
Let $\Pi_1(x)=x^{p^k+1}$ on $\mathbb{F}_{p^r}$,
where $p$ is an odd prime, $k\geq 0$ is an integer and
$\frac{r}{gcd(r,k)}$ is an odd integer.  Then $\Pi_1(x)$ is a planar function.
\end{thm}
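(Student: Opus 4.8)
The plan is to verify the planarity definition directly. First I would compute the difference function: expanding $(x+a)^{p^k+1}=(x+a)^{p^k}(x+a)$ and using the characteristic-$p$ identity $(x+a)^{p^k}=x^{p^k}+a^{p^k}$ (equivalently, Lemma~\ref{binomprime} annihilates every intermediate binomial coefficient), one gets $\Delta_{\Pi_1 a}(x)=a\,x^{p^k}+a^{p^k}x+a^{p^k+1}$. This differs from the linearized polynomial $L_a(x):=a\,x^{p^k}+a^{p^k}x$ only by the constant $a^{p^k+1}$, so $\Delta_{\Pi_1 a}$ is a bijection of $\mathbb{F}_{p^r}$ if and only if $L_a$ is, and since $L_a$ is $\mathbb{F}_p$-linear this happens if and only if its kernel is trivial. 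Thus the whole problem reduces to proving that, for every $a\in\mathbb{F}_{p^r}^{*}$, $L_a(x)=0$ forces $x=0$.

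Second, fix $a\in\mathbb{F}_{p^r}^{*}$ and suppose $L_a(x)=0$ with $x\neq0$. Then $a\,x^{p^k}=-a^{p^k}x$, so dividing by $ax$ and then by $a^{p^k-1}$ gives $(x/a)^{p^k-1}=-1$. Writing $u:=x/a\neq0$ and squaring, $u^{p^{2k}-1}=\bigl(u^{p^k-1}\bigr)^{p^k+1}=(-1)^{p^k+1}=1$, where I use that $p^k+1$ is even because $p$ is odd. Hence $u$ lies in $\mathbb{F}_{p^{2k}}\cap\mathbb{F}_{p^r}=\mathbb{F}_{p^{\gcd(2k,r)}}$.

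This is where the hypothesis enters. Put $d:=\gcd(r,k)$; since $r/d$ is odd it is coprime to $2$, so $\gcd(2k,r)=d$. Hence $u\in\mathbb{F}_{p^d}^{*}$ and $u^{p^d-1}=1$; but $d\mid k$, so $u^{p^k-1}=\bigl(u^{p^d-1}\bigr)^{(p^k-1)/(p^d-1)}=1$, contradicting $u^{p^k-1}=-1$ (again using $-1\neq1$ in odd characteristic). Therefore $L_a$ is injective, hence bijective, for every $a\in\mathbb{F}_{p^r}^{*}$, and $\Pi_1$ is planar. The case $k=0$ needs no argument, since there $\Pi_1(x)=x^2$ and the relation $(x/a)^{p^k-1}=-1$ already reads $1=-1$.

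The only real obstacle is the gcd bookkeeping in the third step: one must recall $\mathbb{F}_{p^m}\cap\mathbb{F}_{p^n}=\mathbb{F}_{p^{\gcd(m,n)}}$ and check carefully that the parity hypothesis forces $\gcd(2k,r)=\gcd(r,k)$, while keeping track of the two places where oddness of $p$ is used (to make $p^k+1$ even and to make $-1\neq1$); everything else is formal. A second route replaces the subfield argument by a counting argument: the image of the $(p^k-1)$-power map on $\mathbb{F}_{p^r}^{*}$ has order $(p^r-1)/(p^d-1)$, and $-a^{p^k-1}$ fails to lie in it for every $a$ exactly when $(p^r-1)/(p^d-1)=\sum_{j=0}^{r/d-1}p^{jd}$ is odd, i.e.\ when $r/d$ is odd; this also yields the sharp converse but needs more computation than the argument above.
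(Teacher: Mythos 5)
Your proof is correct. Note that the paper does not prove this statement at all---it is quoted from Dembowski and Ostrom---so there is no internal argument to compare against; what you give is the standard proof: reduce planarity of $x^{p^k+1}$ to injectivity of the $\mathbb{F}_p$-linear map $L_a(x)=a\,x^{p^k}+a^{p^k}x$, derive $(x/a)^{p^k-1}=-1$ from a hypothetical nonzero kernel element, and rule this out via $\gcd(2k,r)=\gcd(k,r)$ (which does follow from $r/\gcd(r,k)$ being odd) together with $-1\neq 1$ in odd characteristic. You correctly isolate the two uses of $p$ odd and handle $k=0$ separately, and the alternative counting route you sketch (image of the $(p^k-1)$-power map has index $p^{\gcd(r,k)}-1$, and $(p^r-1)/(p^{\gcd(r,k)}-1)\equiv r/\gcd(r,k)\pmod 2$) is also sound and gives the converse. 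One cosmetic slip: the step you call ``squaring'' is in fact raising to the $(p^k+1)$-th power, i.e.\ passing from exponent $p^k-1$ to $p^{2k}-1=(p^k-1)(p^k+1)$; the displayed computation is what you intended.
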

This includes $x^2$ is a special case.  We now show that a cubic is the only Alltop type polynomial for this class of planar functions, conditional that $p=5$.


\begin{thm}\label{thmclass1}
Let $\Pi_1(x)=x^{p^k+1}$ on $\mathbb{F}_{p^r}$, where $k\geq 0
$ is an integer and $\frac{r}{gcd(r,k)}$ is an odd integer.
If  for each $a\in \mathbb{F}_{p^r}^*$ there exist $\alpha_a,\beta_a,\delta_a\in
\mathbb{F}_{p^r}$,  an additive polynomial $M_a(x)$ and  a polynomial $f_a(x)$  such that,
\begin{eqnarray}\label{eqn:Mdelta}
\Delta_{f_aa}(x)=\alpha_a\Pi_1(x+\beta_a)+M_a(x)+\delta_a\end{eqnarray} then $p \geq 5$ and $f_a(x)$ is equivalent to a polynomial of degree 3.
\end{thm}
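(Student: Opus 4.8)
The plan is to treat the two conclusions separately, the bound on $p$ being almost immediate. Planar functions exist only in odd characteristic, so $p$ is odd; and in the intended setting $f$ is genuinely Alltop type, i.e.\ each $\Delta_{fa}$ is planar, so Theorem~\ref{thm:char3} rules out $p=3$ and leaves $p\ge5$. I also record at the outset that in~(\ref{eqn:Mdelta}) one necessarily has $\alpha_a\in\mathbb{F}_{p^r}^*$: if $\alpha_a=0$ the right-hand side is additive-plus-constant, hence has second difference constant in $x$ and so is not planar.

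The first substantial step is to show the hypothesis forces $\Pi_1$ to be the function $x^2$. I will use that a single Alltop type $f$ underlies all the $\Delta_{fa}$, so that the mixed second difference $\Delta\Delta_{fab}(x)=\Delta_{fa}(x+b)-\Delta_{fa}(x)$ is symmetric in $a$ and $b$. Substituting~(\ref{eqn:Mdelta}), using that $M_a$ is additive together with $(y+b)^{p^k+1}-y^{p^k+1}=by^{p^k}+b^{p^k}y+b^{p^k+1}$, gives
\[
\Delta\Delta_{fab}(x)=\alpha_a b\,x^{p^k}+\alpha_a b^{p^k}\,x+c(a,b),
\]
with $c(a,b)$ independent of $x$. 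If $p^k\ne1$ then $x^{p^k}$ and $x$ are distinct monomials, so symmetry yields $\alpha_ab=\alpha_ba$ and $\alpha_ab^{p^k}=\alpha_ba^{p^k}$ for all nonzero $a,b$; the first forces $\alpha_a=ca$ with $c\in\mathbb{F}_{p^r}^*$ fixed, and the second then reduces to $a^{p^k-1}=b^{p^k-1}$ for all nonzero $a,b$, i.e.\ $x^{p^k-1}\equiv1$ on $\mathbb{F}_{p^r}^*$, i.e.\ $(p^r-1)\mid(p^k-1)$, i.e.\ $r\mid k$ --- and then $x^{p^k}=x$ on $\mathbb{F}_{p^r}$, so $\Pi_1$ acts as $x^2$. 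Hence I may take $k=0$, and~(\ref{eqn:Mdelta}) becomes $\Delta_{fa}(x)=\alpha_a(x+\beta_a)^2+M_a(x)+\delta_a=\alpha_ax^2+A_a(x)$ with $\alpha_a\in\mathbb{F}_{p^r}^*$ and $A_a$ additive-plus-constant.

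The second step pins down $f$. Write $f=\sum_n u_nx^n$ with $0\le n<p^r$. For each exponent $j\ge3$ that is not a power of $p$ the displayed right-hand side has no term of degree $j$, so the coefficient of $x^j$ in $\Delta_{fa}$, namely $\sum_{n>j}u_n\binom nj a^{n-j}$, vanishes for every $a\in\mathbb{F}_{p^r}^*$; being a polynomial in $a$ of degree less than $p^r-1$ it is identically zero, so $u_n\binom nj\equiv0$ for all such $n,j$. Running this over $j=3,4,\dots$ and invoking Lemma~\ref{nondivbinom} I expect to conclude that $u_n=0$ unless $n\le3$ or $n\in\{\,p^i,\ p^i+1,\ 2p^i,\ p^i+p^\ell:\ i>\ell\ge1\,\}$. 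For each such $n$ other than $3$, Lemma~\ref{constant}, Lemma~\ref{binomprime} and Corollary~\ref{lem} show $\Delta_{(x^n)a}$ is itself additive-plus-constant, so the corresponding monomial contributes only an additive-plus-constant term to $\Delta_{fa}$ and is removed by the equivalence operations: adding an additive polynomial to $f$ and permuting coordinates only rephases and relabels the vectors inside each block $V_a$, leaving the induced set of MUBs unchanged. This leaves $f$ equivalent to $u_3x^3+u_2x^2+u_1x+u_0$, and since the coefficient of $x^2$ in $\Delta_{fa}$ equals $3u_3a=\alpha_a\ne0$ (alternatively, by Corollary~\ref{cor:degree} applied to the degree-$2$ function $\Delta_{fa}$) we have $u_3\ne0$; thus $f$ is equivalent to a polynomial of degree exactly $3$.

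The hard part will be this last step: showing that $\{n\le3\}\cup\{p^i,p^i+1,2p^i,p^i+p^\ell\}$ is exactly the set of exponents left undetermined by the relations $u_n\binom nj\equiv0$, that each surviving non-cubic exponent really has an additive-plus-constant difference function (and is therefore trivial for the construction), and that no cross-cancellation between two surviving monomials can reintroduce a forbidden degree into $\Delta_{fa}$. This is pure bookkeeping with binomial coefficients modulo $p$, for which Lemmas~\ref{nondivbinom}--\ref{binomRed} and Corollary~\ref{lem} are designed; it also requires being explicit about the notion of equivalence used for $f$, namely the one under which the sets of MUBs produced via Theorems~\ref{royscott} and~\ref{alltop} coincide.
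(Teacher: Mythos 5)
Your proposal is correct in substance but takes a genuinely different route from the paper. The paper works by degree bookkeeping: Theorem \ref{thm:m-1} and Corollary \ref{cor:degree} force $p^s(m-1)=p^k+1$, the cases $k,s\geq 1$, $k=0$, $s=0$ are handled separately, and in the last case $f$ is peeled apart as $x^{p^k+2}+\sum_\rho b_\rho x^{p^k+p^\rho}+h$ with $h=tx^3+ux^2+M'(x)+w$, until comparing the $x^2$ coefficients gives $a^{p^k}+3ta=0$, hence $a^{p^k-1}=1$ for all $a$, so $r\mid k$ and $x^{p^k+2}\equiv x^3$. You reach the same key identity $a^{p^k-1}=1$ much earlier and more cleanly via the symmetry of the mixed second difference: $\Delta\Delta_{fab}(x)=\alpha_a b\,x^{p^k}+\alpha_a b^{p^k}x+c(a,b)$ must be symmetric in $a,b$, forcing $\alpha_a=ca$ and $x^{p^k}\equiv x$, so $\Pi_1$ can be replaced by $x^2$ at the outset (the correct side condition there is ``$x^{p^k}$ and $x$ distinct as reduced polynomials'', i.e. $r\nmid k$, rather than ``$p^k\neq 1$'', but this is harmless since in the excluded case the conclusion is immediate). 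Your second step, comparing coefficients as polynomials in $a$ and invoking Lemma \ref{nondivbinom}, is sound, and the exponent list you only ``expect'' does check out: the $n<p^r$ with $\binom{n}{j}\equiv 0$ for every $j$ with $3\leq j<n$ not a power of $p$ are exactly $n\leq 3$, $p^i$, $2p^i$, $p^i+p^\ell$ (including $p^i+1$), each non-cubic survivor has additive-plus-constant difference, and your own vanishing-in-$a$ argument already rules out the cross-cancellation you worry about, since distinct $n$ give distinct powers $a^{n-j}$; also the $x^2$ coefficient $3u_3a=\alpha_a$ gives $u_3\neq 0$, matching the paper's Case 2. The one soft spot is the disposal of the surviving monomials: $x^{p^i+1}$, $x^{2p^i}$, $x^{p^i+p^\ell}$ are not additive polynomials, so they are not literally removed by ``adding an additive polynomial to $f$''; what is true is that they contribute only additive-plus-constant terms to every $\Delta_{fa}$. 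The paper is loose at exactly the same point (its terms $b_\rho x^{p^k+p^\rho}$ are never shown to vanish or to be absorbed by the stated equivalence), and you at least flag explicitly that the notion of equivalence for $f$ needs to be pinned down; likewise both you and the paper implicitly assume a single $f$ serving all $a$ and $\alpha_a\neq 0$, which you state openly. So: same conclusion, different and arguably cleaner mechanism (second-difference symmetry plus a Lucas-type coefficient sieve in place of the paper's layered decomposition), with the residual imprecision shared with the published proof rather than introduced by you.
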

\begin{proof}
Theorem \ref{thm:char3} shows that $p\geq 5$.
The proof proceeds by establishing a set of possible
degrees for $f$, and eliminating all possibilities other than $3$.  For ease of notation let $\alpha_a=\alpha$, $\beta_a=\beta$, $\delta_a=\delta$, $M_a(x)=M(x)$, and $f_a(x)=f(x)$.

It is assumed that $\Delta_{fa}(x)=\alpha\Pi_1(x+\beta)+M(x)+\delta$. Since $\alpha\Pi_1(x+\beta)+M(x)+\delta$ has
 a term of degree $p^k+1$,  and hence by Corollary \ref{cor:degree}
$f$ has a term of degree $p^k+2$.

Consider a general polynomial function $f(x)$
of degree $n$; $f(x)$ takes the form $f(x)=\sum_{i=0}^n
a_ix^i$, with $a_i\in \mathbb{F}_{p^r}$. That is, $f$ can
be written as the sum of monomials $f_i(x)=a_ix^i$ of
degree $i$ or equivalently $f(x)=\sum_{i=0}^n f_i(x)$.
Hence
\begin{eqnarray*}
f(x+a)-f(x)&= &  \sum_{i=0}^n f_i(x+a)- f_i(x)\\
\Delta_{fa}(x) &= & \sum_{i=1}^n \Delta_{f_i,a}(x).
\end{eqnarray*}

It follows that the degree of $\Delta_{f_i,a}(x)$ is less than or
equal to $p^k+1$ for all $i$. Each of these monomials can
be treated separately, with an argument similar to that
presented below.

 Let $f_n=x^n$   where $n=p^sm$ and $gcd(p,m)=1$, and
 $\Delta_{f_n,a}(x)$ is of degree $p^k+1$. By Theorem \ref{thm:m-1}, the degree of $\Delta_{f_n,a}(x)$ is
$p^s(m-1)$, and we know the degree of $\Pi_1$ is $p^k+1$, so we require
\begin{eqnarray*}
p^s(m-1)=p^k+1.
\end{eqnarray*}
There are three cases to consider, $k,s\geq 1$, $k=0$
or $s=0$.

\textbf{Case 1}: If $k,s\geq 1$, $p | p^s(m-1)$ but $p\not| (p^k+1)$ leading
to a contradiction.

\textbf{Case 2}: If $k=0$, then $p^s(m-1)=2$ and so $s=0$ and
$m=3$, which implies $p \geq 5$ as already shown. This is the Alltop function.

\textbf{Case 3}: If $s=0$, we assume $k\geq 1$ and search for solutions for
$n$ when for some $i$ $f_i(x)$ has degree $p^k+2$  thus
\begin{equation}\label{eqn:g}
f(x)=x^{p^k+2}+g(x),
\end{equation}
with $g(x)$ a
polynomial function such that $\Delta_{ga}(x)$ is of degree $p^k$ or less.
By assumption
 \begin{eqnarray*} \Delta_{fa}(x)&= &
\alpha(x+\beta)^{p^k+1}+M(x)+\delta. \end{eqnarray*}
Using Corollary  \ref{lem}, this can be simplified  to
\begin{align}
\Delta_{fa}(x)= & \alpha x^{p^k+1}+\alpha\beta
x^{p^k}\nonumber +\alpha\beta^{p^k}x\nonumber\\
& +\alpha\beta^{p^k+1}+\sum_{i=0}^{k}a_ix^{p^i} +\delta.\label{DeltaEqualPi}
\end{align}

On the other hand, using equation \ref{eqn:g} and Corollary \ref{lem} we get
\begin{align}
\Delta_{f a}(x)=& (x+a)^{p^k+2}-x^{p^k+2}+\Delta_{ga}(x)\nonumber\\
=& 2ax^{p^k+1}+a^2x^{p^k}+a^{p^k}x^2\nonumber\\
& +2a^{p^k+1}x
  +a^{p^k+2}
 +\Delta_{ga}(x)\label{ExpandDeltaf}
\end{align}

By comparing the coefficient of the $x^{p^k+1}$ terms in
equations \ref{DeltaEqualPi} and \ref{ExpandDeltaf} we find
that
\begin{align}
\alpha = & 2a.\label{alpha1}
\end{align}

If $\Delta_{ga}(x)$ has degree $p^k$, then by Corollary \ref{cor:degree}, 
$g(x)=\sum_{r=0}^{k}b_rx^{p^r(p^{k-r}+1)}+h(x)$  where $h(x)$ is a polynomial such that $\Delta_{ha}(x)$ is a polynomial of degree $p^{k}-1$ or less. Let $b'=\sum_{r=0}^k \binom{p^k+p^r}{p^r}b_ra^{p^r}$.

\begin{align}
\Delta_{f a}(x)=& (x+a)^{p^k+2}-x^{p^k+2}+\Delta_{ga}(x)\\
=& 2ax^{p^k+1}+ (a^2+
b') x^{p^k}\nonumber\\
& + \sum_{r=0}^k \binom{p^k+p^r}{p^r}b_ra^{p^k}x^{p^r} \nonumber\\
& +a^{p^k}x^2 +2a^{p^k+1}x
 +a^{p^k+2}\nonumber\\
& +\sum_{r=0}^k b_ra^{p^r(p^{k-r}+1)}+\Delta_{ha}(x)\label{ExpandDeltafg}
\end{align}
There exits  an additive polynomial $M$ in equation (\ref{eqn:Mdelta}) that can equate the  coefficients of any term of the $x^{p^i}$
terms in equations (\ref{DeltaEqualPi}) and
(\ref{ExpandDeltafg}). 

Note that equation (\ref{DeltaEqualPi}) has no $x^2$ term but equation (\ref{ExpandDeltafg}) does. Hence the coefficient of the   $x^2$ term in $\Delta_{ha}(x)$ must be
nonzero, and must cancel with the $x^2$ term already present in equation (\ref{ExpandDeltafg}).

However, we note that all the higher order terms are in
agreement, hence all such terms in $\Delta_{ha}(x)$ must have
zero coefficients implying $\Delta_{ha}(x)$ has degree 2, and
consequently  $h(x)$ is equivalent to a polynomial of degree $3$. Thus let
$h(x)=tx^3+ux^2+M(x)+w$ where $M'(x)$ is an additive polynomial.  Equation (\ref{ExpandDeltaf}) becomes

\begin{align}
\Delta_{f a}(x)= & 2ax^{p^k+1}+ (a^2+
b') x^{p^k}\nonumber\\
& + \sum_{r=0}^k \binom{p^k+p^r}{p^r}b_ra^{p^k}x^{p^r} \nonumber\\
& +(a^{p^k}+3ta)x^2 \nonumber\\
& +(2a^{p^k+1}+3ta^2+2ua)x
 +a^{p^k+2}\nonumber\\
& +\sum_{r=0}^k b_ra^{p^r(p^{k-r}+1)}+ta^3+ua^2\nonumber\\
& +\Delta_{M'a}(x)\label{eqn:longf}
\end{align}


Note that $t,u, \in \mathbb{F}_{p^r}$ and are fixed for $f(x)$.
The  coefficient of the $x^2$ term in equation
(\ref{eqn:longf}) is  $\binom{p^k+2}{2}a^{p^k}+3ta$ while in
equation (\ref{DeltaEqualPi}) the coefficient of $x^2$ is
zero. Thus
\begin{align}
0 = & a^{p^k}+3ta\nonumber\\
 = & a[a^{p^k-1}+3t]\nonumber\\
 = & a^{p^k-1}+3t\label{a}
\end{align}
In  equation (\ref{a}) $p,\ k$ and $t$ are fixed and $a$
can take any value in the field. Under the given
assumptions,
\begin{align}
a^{p^k-1}=a'^{p^k-1}
\end{align}
for all $a,a'\in\mathbb{F}^*_{p^r}$, hence
\begin{align}
a^{p^k-1}=1\label{identity}
\end{align}
 for all $a\in \mathbb{F}^*_{p^r}$.  Consequently $x^{p^k+2}\equiv x^3$, and $x^{2p^k}\equiv x^2$.  Note that equation \ref{identity} implies that $k=0$ or $r$ divides $k$.  Hence $f$ is equivalent to a polynomial of  degree $3$ which is already shown to be a valid solution in case 2.
\end{proof}

\section{Conclusion}
We have shown that for a specific family of Planar functions, a cubic is the only Alltop type polynomial.
We have also shown that Alltop type functions cannot exist on fields of characteristic $3$, which means that Alltop type functions cannot exist for many classes of planar functions.

 New planar polynomials are continually being discovered. Thus investigating the existence of Alltop type polynomials for all classes of planar function cannot yet be completed.  However many of the newly discovered planar functions are on fields of characteristic $3$. So perhaps the possible solution space is not expanding so rapidly.

 The question of the existence of another Alltop type polynomial is still open.  As is the question of whether any Alltop type polynomial would produce a  set of MUBs which are non-equivalent to the corresponding planar function MUBs.

\section*{Acknowledgment}
Thanks are due to  the referees who noted simplifications in the proof of Theorem \ref{thmclass1}.

This work was partially supported by an Australian Mathematical Society Lift-Off fellowship.



%

\end{document}